\theoremstyle{plain}
\newtheorem*{acknowledgement}{Acknowledgement}
\newtheorem{corollary}{\bf Corollary}
\newtheorem{definition}{\bf Definition}
\newtheorem{lemma}{\bf Lemma}
\newtheorem{remark}{Remark}
\newtheorem{theorem}{\bf Theorem}
\theoremstyle{definition}
\numberwithin{equation}{section}
\title[Critical metrics of the volume functional]{Critical metrics of the volume functional with pinched curvature}
\author{H. Baltazar}
\author{C. Queiroz}
\address[H. Baltazar]{Departamento de Matem\'{a}tica, Universidade Federal do Piau\'{\i}\\
64049-550 Te\-re\-si\-na, Piau\'{\i}, Brazil.}
\email{halyson@ufpi.edu.br}
\address[C. Queiroz]{Departamento de Matem\'{a}tica, Universidade Federal do Piau\'{\i}\\
64049-550 Te\-re\-si\-na, Piau\'{\i}, Brazil.}
\email{chrisqueiroz@ufpi.edu.br}
\subjclass[2010]{Primary 53C25, 53C20, 53C21; Secondary 53C65}
\keywords{Volume Functional; critical metrics; Weyl tensor}
\begin{document}

\newcommand{\spacing}[1]{\renewcommand{\baselinestretch}{#1}\large\normalsize}
\spacing{1.2}

\begin{abstract}
In this paper, we prove that a critical metric of the volume functional with pinched Weyl curvature is isometric to a geodesic ball in $\mathbb{S}^{n}.$ Moreover, we provide a necessary and sufficient condition on the norm of the gradient of the potential function in order to classify such critical metrics.
\end{abstract}

\maketitle

\section{Introduction}\label{intro}

The investigation of critical metrics becomes a classical topic in geometric analysis. In this sense, much attention has been given to the critical metrics of the volume functional $V:\mathcal{M}_{\gamma}^{R}\rightarrow\mathbb{R}$, where $\mathcal{M}_{\gamma}^{R}$ denotes the space all structures on $M$ with constant scalar curvature and prescribed boundary metric $\gamma$. In \cite{MT09}, inspired by ideas developed in \cite{FST09}, Miao and Tam provided sufficient conditions for a metric $g\in\mathcal{M}_{\gamma}^{R}$ to be a critical point. In fact, they proved that $g$ is a critical point of the functional $V|_{\mathcal{M}_{\gamma}^{R}}$ if and only if  there exist a smooth function $f$ on $M$ such that $f|_{\partial M}=0$ and satisfies the following system of PDE's
\begin{equation}\label{eqMiaoTam}
-(\Delta f) g+Hessf-fRic=g,
\end{equation}
provided that the first Dirichlet eigenvalue of $(n-1)\Delta_{g}+R$ is positive. Here, the terms $Ric$ and $Hessf$ stands for the Ricci tensor and Hessian form associated to $g$ on $M$, respectively. 

Following the terminology used in \cite{BDR,MT09,MT11} we recall the definition of such critical metrics investigated by Miao-Tam, which for simplicity will be called Miao-Tam critical metrics.

\begin{definition}\label{def1} 
A Miao-Tam critical metric is a 3-tuple $(M^n,\,g,\,f),$ where $(M^{n},\,g),$ $n\geq3,$ is a compact oriented Riemannian  manifold with a smooth boundary $\partial M$ and $f:M^{n}\rightarrow\mathbb{R}$ is a smooth function such that $f^{-1}(0)=\partial M$ and satisfies (\ref{eqMiaoTam}).
\end{definition}

There are several works about classification of Miao-Tam critical metrics in\-vol\-ving the Weyl curvature tensor, see for instance, \cite{BBBV20,bbb,BDR21,balt18,BDR,MT11}. Hence, motivated by \cite{Catino13} and based in techniques developed in \cite{BaltCPE,Balt20}, we shall provide a classification of a such critical metrics considering a pointwise pinching condition. 
More precisely, we have the following result.

\begin{theorem}\label{Wineq}
Let $(M^{n},g,f)$ be a Miao-Tam critical metric with nonnegative scalar curvature and satisfying
\begin{equation}\label{Welyestimate}
\left|W+\frac{\sqrt{n}}{\sqrt{2}(n-2)}\mathring{Ric}\varowedge g\right|\leq\frac{R}{\sqrt{2(n-1)(n-2)}}.
\end{equation}
Then $(M^{n},g)$ is isometric to a geodesic ball in $\mathbb{S}^{n}.$ 
\end{theorem}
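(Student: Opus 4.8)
The strategy is to turn the pinching (\ref{Welyestimate}) into a pointwise bound on $|W|$ and $|\mathring{Ric}|$, insert it into a weighted integral identity for the Cotton tensor, and force the curvature to vanish. First I would simplify the left-hand side of (\ref{Welyestimate}). Because $W$ is totally trace-free it is orthogonal to $\mathring{Ric}\varowedge g$, and a direct computation gives $|\mathring{Ric}\varowedge g|^{2}=4(n-2)|\mathring{Ric}|^{2}$; hence (\ref{Welyestimate}) is equivalent to
\[
|W|^{2}+\frac{2n}{n-2}\,|\mathring{Ric}|^{2}\le\frac{R^{2}}{2(n-1)(n-2)}.
\]
The round geodesic ball, for which $W=0$ and $\mathring{Ric}=0$, trivially satisfies this inequality, and is the rigidity model I expect to recover.

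Next I would record the algebraic consequences of (\ref{eqMiaoTam}). Tracing it gives $\Delta f=-\frac{fR+n}{n-1}$, and its trace-free part reads $Hessf-\frac{\Delta f}{n}g=f\mathring{Ric}$. Taking one more covariant derivative of (\ref{eqMiaoTam}), commuting via the Ricci identity and using that $R$ is constant, I would derive the fundamental identity
\[
f\,C_{ijk}=T_{ijk}-W_{ijkl}\nabla^{l}f,
\]
where $C_{ijk}=\nabla_{i}R_{jk}-\nabla_{j}R_{ik}$ is the Cotton tensor and $T_{ijk}$ is an auxiliary tensor depending linearly on $\mathring{Ric}$ and $\nabla f$.

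The central step is the master integral formula. Contracting the identity above with $C^{ijk}$ and integrating over $M$, I would integrate by parts using $\nabla^{i}W_{ijkl}=\frac{n-3}{n-2}C_{jkl}$ and $\operatorname{div}\mathring{Ric}=0$ (both valid since $R$ is constant). Since $f$ vanishes on $\partial M$, every boundary integrand carries a factor of $f$ and drops out, yielding an identity of the form
\[
\int_{M}f\,|C|^{2}\,dM=-\int_{M}f\,Q\,dM,
\]
where $Q$ is a cubic curvature quantity combining $W_{ikjl}\mathring{R}^{ij}\mathring{R}^{kl}$, $\operatorname{tr}(\mathring{Ric}^{3})$ and $R\,|\mathring{Ric}|^{2}$. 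The pinching now enters through the sharp inequalities $|\operatorname{tr}(\mathring{Ric}^{3})|\le\frac{n-2}{\sqrt{n(n-1)}}|\mathring{Ric}|^{3}$ (Okumura) and $|W_{ikjl}\mathring{R}^{ij}\mathring{R}^{kl}|\le\sqrt{\frac{n-2}{2(n-1)}}\,|W|\,|\mathring{Ric}|^{2}$: these bound $Q$ below by $|\mathring{Ric}|^{2}$ times a linear form in $|W|$ and $|\mathring{Ric}|$, and a Cauchy--Schwarz estimate against the reduced pinching inequality shows $Q\ge0$ pointwise. This is precisely where the constant $\frac{R}{\sqrt{2(n-1)(n-2)}}$ is forced. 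With $f\ge0$ and $Q\ge0$ the master formula gives $\int_{M}f|C|^{2}\le0$, so $C\equiv0$ and $fQ\equiv0$.

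To finish, the equality case of the algebraic inequalities together with $C\equiv0$ forces $\mathring{Ric}\equiv0$, and then the reduced pinching gives $W\equiv0$; thus $g$ is Einstein and locally conformally flat, i.e.\ a space form, while (\ref{eqMiaoTam}) becomes $Hessf=\varphi\,g$ for a scalar function $\varphi$. Invoking the classification of space-form Miao--Tam critical metrics together with $R\ge0$ then identifies $(M^{n},g)$ with a geodesic ball in $\mathbb{S}^{n}$. I expect the main obstacle to be the derivation of the master integral formula with the correct sign and constants and their exact matching with the sharp algebraic inequalities, so that the threshold is exactly $\frac{R}{\sqrt{2(n-1)(n-2)}}$; the boundary bookkeeping and the equality analysis forcing $\mathring{Ric}\equiv0$ are the delicate secondary points.
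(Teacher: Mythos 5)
Your overall architecture (a weighted integral identity plus a pointwise algebraic pinching estimate, then Einstein and \cite[Theorem 1.1]{MT11}) is the same as the paper's, and your reformulation of (\ref{Welyestimate}) as $|W|^{2}+\frac{2n}{n-2}|\mathring{Ric}|^{2}\le\frac{R^{2}}{2(n-1)(n-2)}$ is correct. However, both decisive steps of your proposal have genuine gaps.

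First, the ``master integral formula'' $\int_{M}f|C|^{2}\,dM_{g}=-\int_{M}fQ\,dM_{g}$, with $Q$ purely cubic in curvature, is asserted rather than derived, and the derivation you sketch cannot produce it. Contracting the identity (\ref{auxC}) with $C^{ijk}$ gives $f|C|^{2}$ equal to terms of the form $C_{ijk}\mathring{R}_{ik}\nabla_{j}f$ and $C_{ijk}W_{ijkl}\nabla_{l}f$, which carry no factor of $f$; converting them into $f$-weighted curvature terms forces you to integrate by parts, and this produces $\int_{M}f\,\mathring{R}_{ik}\nabla_{j}C_{ijk}\,dM_{g}$ and $\int_{M}f\,W_{ijkl}\nabla_{l}C_{ijk}\,dM_{g}$. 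The divergence of the Cotton tensor is not a cubic curvature quantity: for constant $R$ it contains the rough Laplacian $\Delta R_{ik}$, so one is led to terms of the type $\int f|\nabla Ric|^{2}$ and $\int\langle\nabla f,\nabla|\mathring{Ric}|^{2}\rangle$, which do not cancel inside a single identity; moreover, eliminating them by a further integration by parts creates surviving boundary integrals such as $\int_{\partial M}|\mathring{Ric}|^{2}\partial_{\nu}f$, since the lone factor of $f$ has already been differentiated away and $\partial_{\nu}f\neq0$ on $\partial M$. This is exactly why the paper works with weight $f^{2}$ and needs \emph{two} independent Bochner-type identities (\cite[Lemmas 3.1 and 3.2]{balt18}, reproduced as Lemmas \ref{auxint1} and \ref{auxint2}): both contain $\int f^{2}|\nabla Ric|^{2}$ and $\int fC_{ijk}W_{ijkl}\nabla_{l}f$, and subtracting them cancels precisely these terms, leaving (\ref{keyID}). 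Note also that (\ref{keyID}) contains the standalone nonnegative terms $\int|\mathring{Ric}|^{2}|\nabla f|^{2}$ and $\frac{n}{n-1}\int f|\mathring{Ric}|^{2}$; their vanishing gives Einstein at once, with no equality-case analysis. Your endgame, by contrast, tries to extract $\mathring{Ric}\equiv0$ from ``the equality case of the algebraic inequalities together with $C\equiv0$,'' which stalls at any point where the pinching holds with equality while $\mathring{Ric}\neq0$.

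Second, your algebraic estimate does not close at the stated threshold. Bounding the two cubic terms separately by Okumura and by Huisken and then optimizing (Cauchy--Schwarz) against the reduced pinching yields at best
\begin{equation*}
\left|\tfrac{n}{n-2}tr(\mathring{Ric}^{3})-W_{ijkl}\mathring{R}_{ik}\mathring{R}_{jl}\right|
\le\left[\sqrt{\tfrac{n}{n-1}}\,|\mathring{Ric}|+\sqrt{\tfrac{n-2}{2(n-1)}}\,|W|\right]|\mathring{Ric}|^{2}
\le\frac{R}{\sqrt{2}\,(n-1)}\,|\mathring{Ric}|^{2},
\end{equation*}
and this bound is the best that route can give; but the identity requires the bound $\frac{R}{2(n-1)}|\mathring{Ric}|^{2}$, which is smaller by a factor of $\sqrt{2}$. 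In other words, with separate Okumura/Huisken estimates you only prove the theorem under the stronger pinching $\frac{R}{2\sqrt{(n-1)(n-2)}}$, not (\ref{Welyestimate}). What is needed, and what the paper quotes from \cite{BaltCPE} and \cite{Fu17} (a Catino-type lemma), is the combined inequality
\begin{equation*}
\left|\tfrac{n}{n-2}tr(\mathring{Ric}^{3})-W_{ijkl}\mathring{R}_{ik}\mathring{R}_{jl}\right|
\le\sqrt{\tfrac{n-2}{2(n-1)}}\left(|W|^{2}+\tfrac{2n}{n-2}|\mathring{Ric}|^{2}\right)^{1/2}|\mathring{Ric}|^{2},
\end{equation*}
which treats both cubic terms simultaneously and is strictly stronger than the sum of the two separate sharp bounds (their equality configurations are different). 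With this inequality the hypothesis (\ref{Welyestimate}) gives pointwise nonnegativity exactly, which is how the paper concludes.
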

The symbol $\varowedge$, in the above expression, denotes the Kulkarni-Nomizu product which is defined for any two symmetric $(0,2)$-tensor $S$ and $T$ as follows
$$(S\varowedge T)_{ijkl}=S_{ik}T_{jl}+S_{jl}T_{ik}-S_{il}T_{jk}-S_{jk}T_{il}.$$

Since the Weyl tensor vanishes in dimension three, we obtain the following consequence:
\begin{corollary}
Let $(M^{3},g,f)$ be a three dimensional Miao-Tam critical metric with nonnegative scalar curvature and satisfying
\begin{equation}\label{3dcase}
|\mathring{Ric}|\leq\frac{R}{\sqrt{24}}.
\end{equation}
Then $(M^{3},g)$ is isometric to a geodesic ball in $\mathbb{S}^{3}.$ 
\end{corollary}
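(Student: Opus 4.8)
The plan is to obtain the corollary as a direct specialization of Theorem \ref{Wineq} to dimension three. The key structural fact is that in dimension three the Weyl tensor vanishes identically, $W\equiv 0$, so the tensor appearing inside the norm in \eqref{Welyestimate} collapses to the single term $\tfrac{\sqrt{3}}{\sqrt{2}}\mathring{Ric}\varowedge g$. Consequently the whole argument amounts to verifying that, once $W$ is dropped and the constants are evaluated at $n=3$, the pinching hypothesis \eqref{Welyestimate} is exactly equivalent to the scalar inequality \eqref{3dcase}.

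First I would compute the norm of the Kulkarni--Nomizu product $\mathring{Ric}\varowedge g$. Expanding the definition of $\varowedge$ and contracting all four indices, using only the symmetries of the product together with $g_{ij}g^{ij}=n$, one arrives at the general identity
\[
|h\varowedge g|^{2}=4(n-2)|h|^{2}+4(\mathrm{tr}\,h)^{2}
\]
valid for any symmetric $(0,2)$-tensor $h$. Since $\mathring{Ric}$ is trace-free the last term drops, giving $|\mathring{Ric}\varowedge g|^{2}=4(n-2)|\mathring{Ric}|^{2}$, and hence $|\mathring{Ric}\varowedge g|=2|\mathring{Ric}|$ when $n=3$. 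This single contraction is the only genuine computation in the proof.

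Next I would substitute $n=3$ and $W=0$ into \eqref{Welyestimate} and track the constants. The coefficient becomes $\tfrac{\sqrt{3}}{\sqrt{2}}$, so the left-hand side equals $\tfrac{\sqrt{3}}{\sqrt{2}}\cdot 2|\mathring{Ric}|=\sqrt{6}\,|\mathring{Ric}|$, while the right-hand side has denominator $\sqrt{2(n-1)(n-2)}=\sqrt{4}=2$. Thus \eqref{Welyestimate} reads $\sqrt{6}\,|\mathring{Ric}|\leq R/2$, that is $|\mathring{Ric}|\leq R/(2\sqrt{6})=R/\sqrt{24}$, which is precisely \eqref{3dcase}. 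Having matched the hypotheses, I would invoke Theorem \ref{Wineq} to conclude that $(M^{3},g)$ is isometric to a geodesic ball in $\mathbb{S}^{3}$.

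The only point demanding care is the bookkeeping of numerical constants: one must confirm that the factor $2(n-2)$ produced by $|\mathring{Ric}\varowedge g|$ combines with the normalizing coefficient $\sqrt{n}/(\sqrt{2}(n-2))$ and the denominator $\sqrt{2(n-1)(n-2)}$ so as to yield exactly $\sqrt{24}$ at $n=3$. There is no analytic obstacle here, only the verification that the constant in \eqref{Welyestimate} was calibrated to reproduce \eqref{3dcase} in the Weyl-flat case.
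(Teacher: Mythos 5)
Your proposal is correct and follows exactly the paper's route: the corollary is obtained by specializing Theorem \ref{Wineq} to $n=3$, where $W\equiv 0$, and checking that the constants in \eqref{Welyestimate} reduce to \eqref{3dcase}; your contraction $|\mathring{Ric}\varowedge g|^{2}=4(n-2)|\mathring{Ric}|^{2}$ and the resulting identity $\sqrt{6}\,|\mathring{Ric}|\leq R/2 \iff |\mathring{Ric}|\leq R/\sqrt{24}$ are both accurate. The paper states this specialization without even writing out the constant bookkeeping, so your verification is, if anything, more complete than the published argument.
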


\begin{remark}
It is easy to verify that, the pinching (\ref{3dcase}) becomes in this case
$$|\mathring{Ric}|\leq\frac{R}{\sqrt{24}}\leq\frac{R}{\sqrt{6}}.$$
Then it suffices to apply \cite[Corollary 1.5]{balt18} to conclude that $(M^{3},g)$ is isometric to a geodesic ball in $\mathbb{S}^{3}.$
\end{remark}

In order to justify our second result will be necessary to remember the classical simple connected examples of Miao-Tam critical metrics and a property involving its potential function. The first one is the Euclidean ball of radius $r_{0}$ in $\mathbb{R}^{n}$ with standard metric $g_{0}$ and potential function
$$f(x)=\frac{1}{2(n-1)}(r_{0}^{2}-|x|^{2}).$$
Now, since the scalar curvature is null and $|\nabla f |_{|_{\partial M}}=\frac{r_{0}}{n-1}$ we can deduce 
$$|\nabla f|^{2}+\frac{Rf^{2}}{n(n-1)}+\frac{2f}{n-1}=\frac{r_{0}^{2}}{(n-1)^{2}}=|\nabla f|_{|_{\partial M}}^{2}.$$
At the same time, it is not hard to verify that $(\mathbb{S}^{n},g_{0}),$ the standard sphere with canonical metric $g_{0},$ is a Miao-Tam critical metric with potential function
$$f(x)=\frac{1}{n-1}\left(\frac{cosr}{cos r_{0}}-1\right),$$
where $r$ is the geodesic distance from the point $(0,\ldots,1)$ and radius $r_{0}\neq\frac{\pi}{2}.$
Furthermore, with a straightforward computation we obtain the same identity, 
$$|\nabla f|^{2}+\frac{Rf^{2}}{n(n-1)}+\frac{2f}{n-1}=\frac{sen^{2} r_{0}}{(n-1)^{2}cos^{2}r_{0}}=|\nabla f|_{|_{\partial M}}^{2}.$$
Finally, the last interesting example is the hyperbolic space $\mathbb{H}^{n}$ embedded in the well-known Minkowski space ($\mathbb{R}^{n+1},dx_{1}^{2}+\ldots+dx_{n}^{2}-dt^{2}$),
$$\mathbb{H}^{n}=\{(x_{1},\ldots,x_{n},t)\;|\;x_{1}^{2}+\ldots x_{n}^{2}-t^{2}=-1,\;t>0\}.$$
The geodesic ball in $\mathbb{H}^{n}$ centred at $(0,\ldots,0,1)$ is a Miao-Tam critical metric with potential function
$$f(x)=\frac{1}{n-1}\left(1-\frac{coshr}{cosh r_{0}}\right).$$
Similarly, the potential function must satisfy the following expression
$$|\nabla f|^{2}+\frac{Rf^{2}}{n(n-1)}+\frac{2f}{n-1}=\frac{senh^{2} r_{0}}{(n-1)^{2}cosh^{2}r_{0}}=|\nabla f|_{|_{\partial M}}^{2}.$$

It is natural to ask whether these examples are the only Miao–Tam critical metrics that satisfy such a property. In this sense, inspired by ideas developed by Leandro \cite{L15} we provide a complete answer to this question. More precisely, we prove the following result.

\begin{theorem}\label{MTf}
Let $(M^{n},g,f),$  be a Miao-Tam critical metric satisfying 
\begin{equation}\label{idenTHM2}
|\nabla f|^{2}+\frac{Rf^{2}}{n(n-1)}+\frac{2f}{n-1}=|\nabla f|_{|_{\partial M}}^{2}.
\end{equation}
Then $M^{n}$ is isometric to a geodesic ball in a simply connected space form $\mathbb{R}^{n},$  $\mathbb{H}^{n}$ or $\mathbb{S}^{n}.$
\end{theorem}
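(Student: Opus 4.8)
The plan is to differentiate the scalar identity (\ref{idenTHM2}), convert it into a pointwise condition on the Ricci tensor, and then upgrade that condition all the way to the Einstein condition, at which point a known rigidity theorem finishes the argument. Throughout I will use two standard facts about a Miao-Tam critical metric: that the scalar curvature $R$ is constant (taking the divergence of (\ref{eqMiaoTam}) and using the contracted second Bianchi identity leaves $-\tfrac{f}{2}\nabla R=0$, and $R$ is constant because $f\neq 0$ on $\mathrm{int}(M)$), and the trace of (\ref{eqMiaoTam}), namely $\Delta f=-\tfrac{fR+n}{n-1}$.

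First I would set $\phi=|\nabla f|^{2}+\tfrac{Rf^{2}}{n(n-1)}+\tfrac{2f}{n-1}$ and compute $\nabla\phi$. Rewriting (\ref{eqMiaoTam}) as $Hessf=fRic+(\Delta f+1)g$ and using $\nabla_{i}|\nabla f|^{2}=2(Hessf)_{ij}\nabla^{j}f$, a direct computation in which all the $\nabla f$-proportional (scalar) terms collapse yields the clean identity $\nabla\phi=2f\,\mathring{Ric}(\nabla f)$. Since the hypothesis (\ref{idenTHM2}) asserts that $\phi$ equals the constant $|\nabla f|^{2}_{|\partial M}$, we get $\nabla\phi\equiv 0$; as $f^{-1}(0)=\partial M$ forces $f\neq 0$ on the interior, this gives
\[
\mathring{Ric}(\nabla f)=0,\qquad\text{equivalently}\qquad Ric(\nabla f)=\tfrac{R}{n}\,\nabla f,
\]
so $\nabla f$ is everywhere an eigenvector of $Ric$ with eigenvalue $R/n$.

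The decisive step is to show that this eigenvector condition already forces the metric to be Einstein. I would take the divergence of the identity $R_{ij}\nabla^{j}f=\tfrac{R}{n}\nabla_{i}f$. On the left, $\nabla^{i}R_{ij}=\tfrac12\nabla_{j}R=0$ leaves only $\langle Ric,Hessf\rangle$, while on the right constancy of $R$ leaves $\tfrac{R}{n}\Delta f$. Substituting $Hessf=fRic+(\Delta f+1)g$ gives $\langle Ric,Hessf\rangle=f|Ric|^{2}+(\Delta f+1)R$, and then eliminating $\Delta f$ via $\Delta f=-\tfrac{fR+n}{n-1}$, the $\Delta f$-terms and the constant $R$ cancel, leaving $f|Ric|^{2}=\tfrac{fR^{2}}{n}$. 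Hence $|Ric|^{2}=\tfrac{R^{2}}{n}$ on the interior, and since $|Ric|^{2}=|\mathring{Ric}|^{2}+\tfrac{R^{2}}{n}$ we conclude $\mathring{Ric}\equiv 0$ there and, by continuity, on all of $M$; that is, $(M^{n},g)$ is Einstein.

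Finally, with $g$ Einstein I would invoke the classification of Einstein Miao-Tam critical metrics of Miao-Tam \cite{MT11}, by which such a metric is isometric to a geodesic ball in one of the simply connected space forms $\mathbb{R}^{n}$, $\mathbb{H}^{n}$ or $\mathbb{S}^{n}$ (the sign of $R$ selecting the model), which is exactly the desired conclusion. I expect the only genuinely delicate point to be the divergence computation of the previous paragraph: one must check that the $\Delta f$ and constant terms cancel precisely, so that the traceless Ricci norm is squeezed to zero. The gradient computation giving $\nabla\phi=2f\,\mathring{Ric}(\nabla f)$ is routine (if slightly lengthy), and the concluding step is a direct citation.
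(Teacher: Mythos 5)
Your proof is correct, and it genuinely differs from the paper's in its key middle step. The first and last steps coincide: like the paper, you differentiate the hypothesis (using constancy of $R$ and the fundamental equation) to obtain the eigenvector condition $Ric(\nabla f)=\frac{R}{n}\nabla f$ --- your identity $\nabla\phi=2f\,\mathring{Ric}(\nabla f)$ is exactly the paper's first display after dividing by $f$ --- and both arguments finish by citing Theorem 1.1 of Miao--Tam. Where you part ways is in upgrading the eigenvector condition to the Einstein condition. The paper computes $\frac{1}{2}\Delta|\nabla f|^{2}$ from the hypothesis, feeds it into Bochner's formula, and arrives at $|\mathring{Hessf}|^{2}+\mathring{Ric}(\nabla f,\nabla f)=0$; the eigenvector condition kills the second term, so $\mathring{Hessf}\equiv0$, and the identity $f\mathring{Ric}=\mathring{Hessf}$ then gives Einstein. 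You instead take the divergence of the eigenvector identity itself: with $\nabla^{i}R_{ij}=\frac{1}{2}\nabla_{j}R=0$ this yields $\langle Ric,Hessf\rangle=\frac{R}{n}\Delta f$, and substituting $Hessf=fRic+(\Delta f+1)g$ together with $\Delta f=-\frac{Rf+n}{n-1}$ produces exactly the cancellation you flagged as the delicate point, leaving $f|Ric|^{2}=\frac{R^{2}f}{n}$, hence $\mathring{Ric}\equiv0$ on the interior and, by continuity, on $M$; I checked both of your computations and they are right. As for what each approach buys: your route is more elementary, needing only the contracted second Bianchi identity rather than the Bochner formula, and it squeezes the traceless Ricci tensor to zero in one stroke; the paper's route passes through the geometrically meaningful intermediate statement $\mathring{Hessf}\equiv0$ and stays within the Bochner-type toolkit used elsewhere in this literature (and in the companion Theorem on pinched Weyl curvature). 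Both arguments are of comparable length and rigor.
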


\begin{remark}
Let us point out that, for equality (\ref{idenTHM2}) to make sense, it is necessary to remember that a Miao-Tam critical metric has $|\nabla f|$ constant along $\partial M,$ see \cite[Section 3]{BDRR} for more details. 
\end{remark}

\section{Preliminaries}
\label{Preliminaries}

In this section we need recall some special tensors which will be important for understanding the desired results. We start with Weyl tensor, given by the decomposition formula
\begin{equation}\label{weyl}
W_{ijkl}=R_{ijkl}-\frac{1}{n-2}(Ric\varowedge g)_{ijkl}+\frac{R}{2(n-1)(n-2)}(g\varowedge g)_{ijkl},
\end{equation}
where $R_{ijkl}$ stands for the Riemann curvature tensor. In the sequel, we have the Cotton tensor $C$ which is given by
\begin{equation}\label{cotton}
\displaystyle{C_{ijk}=\nabla_{i}R_{jk}-\nabla_{j}R_{ik}-\frac{1}{2(n-1)}\big(\nabla_{i}Rg_{jk}-\nabla_{j}R g_{ik}).}
\end{equation}
It is easy to check that $C_{ijk}$ is skew-symmetric in the first two indices and trace-free in any two indices. These two tensors described above are related as follows
\begin{equation}\label{cottonwyel}
\displaystyle{C_{ijk}=-\frac{(n-2)}{(n-3)}\nabla_{l}W_{ijkl},}
\end{equation}provided $n\ge 4.$

Now, we note that the fundamental equation of a Miao-Tam critical metric can be rewritten in the tensorial language as follows
\begin{equation}\label{eq:tensorial}
-\Delta fg_{ij}+\nabla_{i}\nabla_{j}f -fR_{ij}=g_{ij}.
\end{equation}
Tracing (\ref{eq:tensorial}) we have
\begin{equation}\label{eqtrace}
\Delta f=-\frac{R}{n-1}f-\frac{n}{n-1}.
\end{equation}
Furthermore, by using (\ref{eqtrace}) it is not difficult to check that
\begin{equation}\label{IdRicHess} 
f\mathring{Ric}=\mathring{Hess f},
\end{equation}
where $\mathring{T}$ stands for the traceless of $T.$

Under this notation we get the following formula for a Miao-Tam critical metric
\begin{equation}\label{auxC}
\nabla_{j}fR_{ik}-\nabla_{i}fR_{jk}=fC_{ijk}-R_{ijkl}\nabla_{l}f-\frac{R}{n-1}(\nabla_{i}fg_{jk}-\nabla_{j}fg_{ik}).
\end{equation} 
Its proof can be found in \cite{BDR}.

\section{Key lemmas}
In this section we shall deduce a couple of integral identities,  which allows us to obtain the classification of the Miao-Tam critical metrics under a curvature estimate. Both integrals are direct consequences of the Bochner type formulas obtained by first author and Ribeiro Jr. in \cite{balt18}, see Lemma 3.1 and Lemma 3.2 for more details.

\begin{lemma}\label{auxint1}
Let $(M^{n},g,f)$ be a Miao-Tam critical metric. Then we have
\begin{eqnarray*}
\int_{M}|\mathring{Ric}|^{2}|\nabla f|^{2}dM_{g}&=&\frac{n-3}{2(n-1)}\int_{M}f^{2}|C_{ijk}|^{2}dM_{g}+\int_{M}f^{2}|\nabla Ric|^{2} dM_{g}\\
&&+\frac{n}{n-1}\int_{M}f|\mathring{Ric}|^{2}dM_{g}+\frac{2}{n-1}\int_{M}f^{2}R|\mathring{Ric}|^{2} dM_{g}\\
&&+\int_{M}f^{2}\left(\frac{n}{n-2}tr(\mathring{Ric}^{3})-W_{ijkl}\mathring{R}_{ik}\mathring{R}_{jl}\right) dM_{g}\\
&&-\frac{n-2}{n-1}\int_{M}fC_{ijk}W_{ijkl}\nabla_{l}f dM_{g},
\end{eqnarray*}
where $\mathring{Ric}^{3}$ is the $2$-tensor defined by $(\mathring{Ric}^{3})_{ij}=\mathring{R}_{ik}\mathring{R}_{kl}\mathring{R}_{lj}.$
\end{lemma}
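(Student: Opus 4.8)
My plan is to obtain the identity by two successive integrations by parts, using the structure equations of a Miao--Tam critical metric to convert every derivative of $f$ into curvature quantities; the boundary integrals produced along the way always vanish because $f|_{\partial M}=0$.

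First I would integrate the divergence of the vector field $f|\mathring{Ric}|^{2}\nabla f$. Since $f=0$ on $\partial M$ the boundary term drops and one is left with
\[
\int_{M}|\mathring{Ric}|^{2}|\nabla f|^{2}\,dM_{g}=-\int_{M}f\langle\nabla|\mathring{Ric}|^{2},\nabla f\rangle\,dM_{g}-\int_{M}f|\mathring{Ric}|^{2}\Delta f\,dM_{g}.
\]
Substituting the trace equation (\ref{eqtrace}) into the last integral already produces the zeroth-order terms $\frac{n}{n-1}\int_{M}f|\mathring{Ric}|^{2}$ and half of $\frac{2}{n-1}\int_{M}f^{2}R|\mathring{Ric}|^{2}$, so everything now reduces to analysing the gradient term.

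For the gradient term I would write $\langle\nabla|\mathring{Ric}|^{2},\nabla f\rangle=2\mathring{R}_{jk}\nabla_{i}f\,\nabla_{i}\mathring{R}_{jk}$ and use (\ref{IdRicHess}) in its differentiated form $f\nabla_{i}\mathring{R}_{jk}=\nabla_{i}(\mathring{Hess f})_{jk}-\nabla_{i}f\,\mathring{R}_{jk}$ to trade the derivative of the Ricci tensor for a third covariant derivative of $f$. Commuting these third derivatives by the Ricci identity brings in the full Riemann tensor contracted against $Hess f$ and $\nabla f$; decomposing it through the Weyl formula (\ref{weyl}) splits this contribution into the cubic term $\frac{n}{n-2}\operatorname{tr}(\mathring{Ric}^{3})$, the Weyl term $-W_{ijkl}\mathring{R}_{ik}\mathring{R}_{jl}$, and the remaining half of $\frac{2}{n-1}\int_{M}f^{2}R|\mathring{Ric}|^{2}$. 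The antisymmetrised first derivatives of the Ricci tensor that appear assemble, through the definition (\ref{cotton}) and the auxiliary equation (\ref{auxC}), into the Cotton contributions $\frac{n-3}{2(n-1)}\int_{M}f^{2}|C_{ijk}|^{2}$ and $-\frac{n-2}{n-1}\int_{M}fC_{ijk}W_{ijkl}\nabla_{l}f$, while the squared third derivatives yield $\int_{M}f^{2}|\nabla Ric|^{2}$ after again invoking (\ref{IdRicHess}).

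The main obstacle is the bookkeeping in this second step: commuting the third covariant derivatives and decomposing each resulting Riemann contraction through (\ref{weyl}) must be carried out carefully so that the trace-free cubic term, the Weyl term, and the two Cotton terms emerge with exactly the stated coefficients, and so that the scalar-curvature pieces coming from the two integrations combine into the single factor $\frac{2}{n-1}$. This is precisely the content of the Bochner-type computation of \cite[Lemma 3.1]{balt18}; once that pointwise formula is in hand, the present lemma follows by integrating it over $M$ and discarding the boundary term, which vanishes because $f|_{\partial M}=0$.
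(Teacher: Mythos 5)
Your proposal is correct and follows essentially the same route as the paper: the paper likewise integrates the divergence of $|\mathring{Ric}|^{2}\nabla f^{2}$ (your $f|\mathring{Ric}|^{2}\nabla f$, up to a factor of $2$), substitutes the trace equation (\ref{eqtrace}) to produce the zeroth-order terms, invokes \cite[Lemma 3.1]{balt18} for the Bochner-type identity handling the gradient term, and converts $\int_{M}fC_{ijk}\nabla_{j}fR_{ik}\,dM_{g}$ into the $|C_{ijk}|^{2}$ and $C_{ijk}W_{ijkl}\nabla_{l}f$ contributions via (\ref{auxC}) and (\ref{weyl}). The only differences are cosmetic bookkeeping of where each coefficient arises.
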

\begin{proof}
Using \cite[Lemma 3.1]{balt18} it is immediate to obtain the following integral identity
\begin{eqnarray}\label{auxI1}
-\int_{M}\langle\nabla f^{2},\nabla|Ric|^{2}\rangle dM_{g}&=&-\int_{M}f^{2}|C_{ijk}|^{2}dM_{g}+2\int_{M}f^{2}|\nabla Ric|^{2}dM_{g}\nonumber\\
&&+\frac{2n}{n-2}\int_{M}f^{2}tr(\mathring{Ric}^{3})dM_{g}+\frac{2}{n-1}\int f^{2}R|\mathring{Ric}|^{2}dM_{g}\nonumber\\
&&+4\int_{M}fC_{ijk}\nabla_{j}fR_{ik}dM_{g}-2\int_{M}f^{2}W_{ijkl}\mathring{R}_{ik}\mathring{R}_{jl}dM_{g},
\end{eqnarray}
where we have used that
$$tr(Ric^{3})=tr(\mathring{Ric}^{3})+\frac{3}{n}R|\mathring{Ric}|^{2}+\frac{1}{n^{2}}R^{3}.$$
On the other hand, using (\ref{auxC}) and (\ref{weyl}), it is not difficult to verify that 
\begin{equation}\label{CWgradf}
\int_{M}fC_{ijk}\nabla_{j}fR_{ik}dM_{g}=\frac{n-2}{2(n-1)}\int_{M}[f^{2}|C_{ijk}|^{2}-fC_{ijk}W_{ijkl}\nabla_{l}f] dM_{g}.
\end{equation}
Moreover, since $M^{n}$ has constant scalar curvature we may use (\ref{eqtrace})  to deduce 
\begin{eqnarray}\label{auxDiv}
{\rm div}(|\mathring{Ric}|^{2}\nabla f^{2})-\langle\nabla f^{2},\nabla|Ric|^{2}\rangle&=&2f\Delta f|\mathring{Ric}|^{2}+2|\mathring{Ric}|^{2}|\nabla f|^{2}\nonumber\\
&=&-\frac{2}{n-1}f^{2}R|\mathring{Ric}|^{2}-\frac{2n}{n-1}f|\mathring{Ric}|^{2}+2|\mathring{Ric}|^{2}|\nabla f|^{2}.
\end{eqnarray}
As consequence, upon integrating (\ref{auxDiv}) over $M$ we apply the divergence theorem to arrive at
\begin{eqnarray}\label{auxDiv1}
-\int_{M}\langle\nabla f^{2},\nabla|Ric|^{2}\rangle dM_{g}&=&-\frac{2}{n-1}\int_{M}f^{2}R|\mathring{Ric}|^{2}dM_{2}-\frac{2n}{n-1}\int_{M}f|\mathring{Ric}|^{2}dM_{g}\nonumber\\
&&+2\int_{M}|\mathring{Ric}|^{2}|\nabla f|^{2}dM_{g}.
\end{eqnarray}

Finally, just replace (\ref{CWgradf}) and (\ref{auxDiv1}) into (\ref{auxI1}) to get the desired result. 
\end{proof}

Now, we use \cite[Lemma 3.2]{balt18}, to get another formula for $\int_{M}|\mathring{Ric}|^{2}|\nabla f|^{2}dM_{g}.$

\begin{lemma}\label{auxint2}
Let $(M^{n},g,f)$ be a Miao-Tam critical metric. Then we have
\begin{eqnarray*}
\frac{3}{2}\int_{M}|\mathring{Ric}|^{2}|\nabla f|^{2}dM_{g}&=&-\frac{1}{n-1}\int_{M}f^{2}|C_{ijk}|^{2}dM_{g}+\int_{M}f^{2}|\nabla Ric|^{2} dM_{g}\\
&&+\frac{n}{2(n-1)}\int_{M}f|\mathring{Ric}|^{2}dM_{g}+\frac{3}{2(n-1)}\int_{M}f^{2}R|\mathring{Ric}|^{2} dM_{g}\\
&&-\frac{n-2}{n-1}\int_{M}fC_{ijk}W_{ijkl}\nabla_{l}f dM_{g}.
\end{eqnarray*}
\end{lemma}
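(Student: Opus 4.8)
The plan is to reproduce the strategy used for Lemma \ref{auxint1}, replacing the input \cite[Lemma 3.1]{balt18} by the companion Bochner-type formula \cite[Lemma 3.2]{balt18}. First I would invoke \cite[Lemma 3.2]{balt18} to write down a raw integral identity for $\int_M|\mathring{Ric}|^{2}|\nabla f|^{2}\,dM_{g}$, the analogue of (\ref{auxI1}). The essential point is that this second formula is arranged so that the cubic contraction $tr(\mathring{Ric}^{3})$ and the Weyl term $W_{ijkl}\mathring{R}_{ik}\mathring{R}_{jl}$ do not survive in the final expression; this is precisely why a second identity is worth deriving, since pairing it with Lemma \ref{auxint1} will later let us eliminate exactly those two terms, which are the ones that cannot be controlled directly by the pinching hypothesis (\ref{Welyestimate}).

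Next I would recycle the two auxiliary computations already established inside the proof of Lemma \ref{auxint1}. On the one hand, the identity (\ref{CWgradf}) --- a consequence of (\ref{auxC}) together with the Weyl decomposition (\ref{weyl}) --- converts the cross term $\int_M fC_{ijk}\nabla_{j}fR_{ik}\,dM_{g}$ into a combination of $\int_M f^{2}|C_{ijk}|^{2}\,dM_{g}$ and $\int_M fC_{ijk}W_{ijkl}\nabla_{l}f\,dM_{g}$, which accounts for the $f^{2}|C_{ijk}|^{2}$ and $fC_{ijk}W_{ijkl}\nabla_{l}f$ terms in the conclusion. On the other hand, the divergence computation (\ref{auxDiv})--(\ref{auxDiv1}), which rests on the constancy of the scalar curvature and on the trace equation (\ref{eqtrace}), rewrites the term $-\int_M\langle\nabla f^{2},\nabla|Ric|^{2}\rangle\,dM_{g}$ in terms of $\int_M|\mathring{Ric}|^{2}|\nabla f|^{2}\,dM_{g}$ together with the lower-order integrals $\int_M f^{2}R|\mathring{Ric}|^{2}\,dM_{g}$ and $\int_M f|\mathring{Ric}|^{2}\,dM_{g}$; this is the step responsible for the precise coefficients $\tfrac{n}{2(n-1)}$ and $\tfrac{3}{2(n-1)}$ as well as for the factor $\tfrac{3}{2}$ on the left-hand side. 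Substituting both ingredients into the identity produced by \cite[Lemma 3.2]{balt18} and gathering like terms should deliver the stated formula, with the coefficient $-\tfrac{1}{n-1}$ in front of $\int_M f^{2}|C_{ijk}|^{2}\,dM_{g}$ and $-\tfrac{n-2}{n-1}$ in front of $\int_M fC_{ijk}W_{ijkl}\nabla_{l}f\,dM_{g}$.

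I expect the main obstacle to be purely computational bookkeeping: expanding $R_{ijkl}$ through (\ref{weyl}) inside the various contractions and tracking the powers of $(n-1)$ and $(n-2)$ through the Kulkarni-Nomizu products, while verifying that the cubic term and the Weyl contraction genuinely cancel in this second formula rather than merely being absorbed into the other integrals. No new geometric idea beyond those already used for Lemma \ref{auxint1} seems necessary, since every analytic ingredient --- the two Bochner formulas of \cite{balt18}, the identity (\ref{auxC}), the Weyl decomposition (\ref{weyl}), and the trace equation (\ref{eqtrace}) --- is already at hand.
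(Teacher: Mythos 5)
Your proposal is correct and coincides with the paper's own proof: the paper likewise invokes \cite[Lemma 3.2]{balt18} to obtain the raw integral identity (in which, as you anticipated, the terms $tr(\mathring{Ric}^{3})$ and $W_{ijkl}\mathring{R}_{ik}\mathring{R}_{jl}$ do not appear) and then concludes by exactly the argument you describe, substituting (\ref{CWgradf}) and (\ref{auxDiv1}) as in the last part of the proof of Lemma~\ref{auxint1}. The bookkeeping works out as you predict: the factor $-\tfrac{3}{4}$ applied to (\ref{auxDiv1}) yields the $\tfrac{3}{2}$ on the left-hand side together with the coefficients $\tfrac{n}{2(n-1)}$ and $\tfrac{3}{2(n-1)}$, and the cross term $2\int_{M}fC_{ijk}\nabla_{j}fR_{ik}\,dM_{g}$ combines with $-\int_{M}f^{2}|C_{ijk}|^{2}\,dM_{g}$ to give the stated coefficients $-\tfrac{1}{n-1}$ and $-\tfrac{n-2}{n-1}$.
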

\begin{proof}
By direct computation using \cite[Lemma 3.2]{balt18}, we achieve
\begin{eqnarray*}
-\frac{3}{4}\int_{M}\langle\nabla f^{2},\nabla|Ric|^{2}\rangle dM_{g}&=&-\int_{M}f^{2}|C_{ijk}|^{2}dM_{g}+\int_{M}f^{2}|\nabla Ric|^{2}dM_{g}\\
&&-\frac{n}{n-1}\int_{M}f|\mathring{Ric}|^{2}dM_{g}+2\int_{M}fC_{ijk}\nabla_{j}fR_{ik}dM_{g}.
\end{eqnarray*}
To conclude, just consider the same argument which was used in the last part of the proof of Lemma~\ref{auxint1}.
\end{proof}

\section{Miao Tam critical metrics with pinched curvature}
In this section we will prove the Theorem~\ref{Wineq} announced in the introduction.

\subsection{Proof of Theorem~\ref{Wineq}} 
First of all, we take the difference between the expressions obtained in Lemma~\ref{auxint1} and Lemma~\ref{auxint2} to infer
\begin{eqnarray}\label{keyID}
0&=&\int_{M}|\mathring{Ric}|^{2}|\nabla f|^{2}dM_{g}+\int_{M}f^{2}|C_{ijk}|^{2}dM_{g}+\frac{n}{n-1}\int_{M}f|\mathring{Ric}|^{2}dM_{g}\nonumber\\
&&+\frac{1}{n-1}\int_{M}f^{2}R|\mathring{Ric}|^{2}dM_{g}+2\int_{M}f^{2}\left(\frac{n}{n-2}tr(\mathring{Ric}^{3})-W_{ijkl}\mathring{R}_{ik}\mathring{R}_{jl}\right) dM_{g}.
\end{eqnarray}
Before proceeding it is important to remember the following inequality
\begin{eqnarray*}
\left|\frac{n}{n-2}\mathring{R}_{ij}\mathring{R}_{jk}\mathring{R}_{ik}-W_{ijkl}\mathring{R}_{ik}\mathring{R}_{jl}\right|\leq\sqrt{\frac{n-2}{2(n-1)}}\left(|W|^{2}+\frac{2n}{n-2}|\mathring{Ric}|^{2}\right)^{1/2}|\mathring{Ric}|^{2},
\end{eqnarray*} 
which is true for an arbitrary $n$-dimensional Riemannian manifold. Its proof can be found in  \cite{BaltCPE} or \cite{Fu17}. 

Hence, using the above inequality and our curvature estimate (\ref{Welyestimate}), it is immediate to check that
\begin{eqnarray*}
&&\frac{1}{n-1}R|\mathring{Ric}|^{2}+2\left(\frac{n}{n-2}tr(\mathring{Ric}^{3})-W_{ijkl}\mathring{R}_{ik}\mathring{R}_{jl}\right)\\
&&\geq\left[\frac{1}{n-1}R-2\sqrt{\frac{n-2}{2(n-1)}}\left|W+\frac{\sqrt{n}}{\sqrt{2}(n-2)}\mathring{Ric}\varowedge g\right|\right]|\mathring{Ric}|^{2}\geq0.
\end{eqnarray*}
As consequence, returning to Identity (\ref{keyID}), we force $M^{n}$ to be Einstein manifold. Then, we are in position to use Theorem 1.1 of \cite{MT11} to conclude that $(M^{n},g)$ is isometric to a geodesic ball in $\mathbb{S}^{n}.$ So, the proof is completed.

\section{Proof of Theorem~\ref{MTf}}

\subsection{Proof of Theorem~\ref{MTf}}
To start with, taking into account that $M$ has constant scalar curvature, we derive our hypothesis on both side to infer
$$\nabla_{i}\nabla_{j}f\nabla_{j}f+\frac{Rf}{n(n-1)}\nabla_{i}f+\frac{1}{n-1}\nabla_{i}f=0.$$
Hence, by $(\ref{eq:tensorial})$, we have 
$$fR_{ij}\nabla_{j}f+\left(\Delta f+1+\frac{Rf}{n(n-1)}+\frac{1}{n-1}\right)\nabla_{j}f=0,$$
which can be written using (\ref{eqtrace}) as
\begin{equation}\label{Ricnablaf}
Ric(\nabla f)=\frac{R}{n}\nabla f.
\end{equation}
Here, we have used that $f$ is positive in the interior of $M$ and the equality immediately follows by continuity. 

On the other hand, using again our hypothesis we may conclude the following expression  of the laplacian on the norm of the gradient of the potential function $f$,
\begin{eqnarray}\label{auxLap}
\frac{1}{2}\Delta|\nabla f|^{2}&=&-\frac{R}{2n(n-1)}\Delta (f^{2})-\frac{1}{n-1}\Delta f\nonumber\\
&=&-\frac{Rf+n}{n(n-1)}\Delta f-\frac{R}{n(n-1)}|\nabla f|^{2}\nonumber\\
&=&\frac{1}{n}(\Delta f)^{2}-\frac{R}{n(n-1)}|\nabla f|^{2}.
\end{eqnarray}

Consequently, combining the classical Bochner's formula \cite{Bochner}
$$\frac{1}{2}\Delta|\nabla f|^{2}=Ric(\nabla f, \nabla f)+\langle\nabla\Delta f,\nabla f\rangle+|Hessf|^{2},$$
Equalities (\ref{auxLap}) and (\ref{eqtrace}), we arrive at
\begin{eqnarray*}
\frac{1}{n}(\Delta f)^{2}-\frac{R}{n(n-1)}|\nabla f|^{2}&=&Ric(\nabla f,\nabla f)+\left\langle\nabla\left(\frac{-Rf-n}{n-1}\right),\nabla f\right\rangle+|Hessf|^{2}\\
&=&Ric(\nabla f,\nabla f)-\frac{R}{n-1}|\nabla f|^{2}+|\mathring{Hessf}|^{2}+\frac{1}{n}(\Delta f)^{2},
\end{eqnarray*}
that is, 
$$|\mathring{Hessf}|^{2}+\mathring{Ric}(\nabla f, \nabla f)=0.$$

Finally, using (\ref{Ricnablaf}), the last equality allow us to conclude that the $\mathring{Hessf}$ is identically zero in $M^{n}.$ This implies, from (\ref{IdRicHess}),  that the $M$ is an Einstein manifold and we are in position to use again Theorem 1.1 of \cite{MT11}  to conclude that $(M^{n},g)$ is isometric to a geodesic ball in a simply connected space form $\mathbb{R}^{n},$ $\mathbb{H}^{n}$ or $\mathbb{S}^{n}$.

\begin{acknowledgement}
The first author was partially supported by PPP/FAPEPI/MCT/CNPq, Brazil [Grant: 007/2018] and CNPq/Brazil [Grant: 422900/2021-4].
\end{acknowledgement}

\end{document}